\documentclass[11pt,a4paper]{amsart}
\usepackage[left=25mm,top=30mm,bottom=25mm,right=25mm]{geometry}
\usepackage[utf8]{inputenc}
\usepackage[english]{babel}
\usepackage{lmodern}
\usepackage{amsmath,amssymb,amsfonts,amsthm,tensor,ytableau}
\usepackage[all]{xy}
\usepackage{bm}
\usepackage{graphicx, caption, wrapfig} 
\usepackage[pdfdisplaydoctitle=true,
      colorlinks=true,
      urlcolor=blue,
      citecolor=blue,
      linkcolor=blue,
      pdfstartview=FitH,
      pdfpagemode=UseNone,
      bookmarksnumbered=true,
      unicode=true]{hyperref}
\usepackage{hyperxmp}
\usepackage{cmap}

\usepackage{pgfplots}
\pgfplotsset{compat=1.18}
\usepackage{float} 

\newtheorem{thm}{Theorem}[section]
\newtheorem{cor}[thm]{Corollary}
\newtheorem{lem}[thm]{Lemma}

\theoremstyle{definition}
\newtheorem{defn}[thm]{Definition}
\newtheorem{rem}[thm]{Remark}

\numberwithin{equation}{section}

\newcommand{\RR}{\mathbb{R}}                

\newcommand{\espace}{\mathcal{E}}           
\newcommand{\body}{\mathcal{B}}             

\newcommand{\Emb}{\mathrm{Emb}^{\infty}}    
\newcommand{\Vect}{\mathrm{Vect}}

\newcommand{\Diff}{\mathrm{Diff}}           

\newcommand{\norm}[1]{\left\Vert#1\right\Vert}
\newcommand{\abs}[1]{\left\vert#1\right\vert}

\newcommand{\vol}{\mathrm{vol}}             
\newcommand{\dvol}{\mathrm{vol}}
\newcommand{\Ric}{\mathbf{Ric}}             

\newcommand{\bA}{\mathbf{A}}
\newcommand{\bB}{\mathbf{B}}

\newcommand{\bQ}{\mathbf{Q}}
\newcommand{\bR}{\mathbf{R}}

\newcommand{\bT}{\mathbf{T}}

\newcommand{\bX}{\mathbf{X}}

\newcommand{\bb}{\mathbf{b}}

\newcommand{\bg}{\mathbf{g}}

\newcommand{\bq}{\mathbf{q}}

\newcommand{\ee}{\bm{e}}

\newcommand{\nn}{\bm{n}}

\newcommand{\bGamma}{\bm{\Gamma}}

\newcommand{\bgamma}{\bm{\gamma}}

\newcommand{\bxi}{\bm{\xi}}

\newcommand{\rd}{\mathrm{d}}
\newcommand{\pp}{\textrm{p}}
\newcommand{\p}{\tilde{\textrm{p}}}

\newcommand{\pd}[2]{\frac{\partial{#1}}{\partial{#2}}}

\newcommand{\pdi}[1]{\partial_{#1}}

\DeclareMathOperator{\tr}{tr} %
\DeclareMathOperator{\grad}{\mathbf{grad}} %
\DeclareMathOperator{\hess}{\mathbf{Hess}}
\DeclareMathOperator{\Riem}{\mathbf{Riem}}

\DeclareMathOperator{\ch}{\cosh}

\hypersetup{
    pdftitle={Minimal surfaces: A Lagrangian derivation of first and second variations},
    pdfauthor={Romain Lloria and Boris Kolev},
    pdfsubject={MSC 2020 :53A10;49Q05;58E30;76B45},
    pdfkeywords={Minimal surfaces; Lagrangian formalism; first area variation; second area variation; Capillarity},
    pdflang=en
}

\begin{document}

\title[Lagrangian formulations for minimal surfaces problem]{Minimal surfaces: A Lagrangian derivation of first and second variations}

\author{R. Lloria}
\address[Romain Lloria]{Université Paris-Saclay, ENS Paris-Saclay, CentraleSupélec, CNRS, LMPS - Laboratoire de Mécanique Paris-Saclay, 91190, Gif-sur-Yvette, France}
\email{romain.lloria@ens-paris-saclay.fr}

\author{B. Kolev}
\address[Boris Kolev]{Université Paris-Saclay, ENS Paris-Saclay, CentraleSupélec, CNRS, LMPS - Laboratoire de Mécanique Paris-Saclay, 91190, Gif-sur-Yvette, France}
\email{boris.kolev@ens-paris-saclay.fr}

\date{\today}%
\subjclass[2020]{53A10; 49Q05; 58E30; }
\keywords{Minimal surfaces; Lagrangian formalism; Pullback covariant derivative; First area variation; Second area variation; Capillarity; Soap bubble}%


\begin{abstract}
  This article develops a rigorous Lagrangian formulation of variational calculus for minimal surfaces, using extensively the concept of pullback covariant derivative. It is shown, in particular, using a geometric argument that all tangential variations vanish. First and second normal variations are then derived.
\end{abstract}

\maketitle

\tableofcontents

\section{Introduction}
\label{sec:intro}

In 1744, Leonhard Euler \cite{Eul1744} posed and solved the first minimal surface problem: finding, among all surfaces passing through two parallel circles, the one with the smallest area. In doing so, he discovered the catenoid. In 1760, Joseph-Louis Lagrange \cite{Lag1760} reformulated 
as the search for solutions to the Euler–Lagrange equation. He showed that physical and mechanical problems can be formulated as minimization problems of actions or areas. Thereafter, Meusnier \cite{Meu1776} deduced in 1776 that the principal curvatures must be opposite, which translates into zero mean curvature. At the same time, he discovered the helicoid. In 1866, Weierstrass \cite{Wei1866} demonstrated that a solution to the Euler–Lagrange equation is harmonic and is thus the real part of an holomorphic function. In doing so, he opened a very important connection with complex analysis, which has been extensively studied around 1855–1890 \cite{Nit1989}.

In 1873, Joseph-Antoine-Ferdinand Plateau \cite{Pla1873} generalized an experimental observation made using soap films: for any given contour homeomorphic to a circle, there exists a solution to the Euler–Lagrange equation. The process of finding this solution was subsequently called Plateau's problem. In doing so, he identified geometric laws underlying physical experiments (soap films, metal reinforcements, surface tension, etc.) and modeled the equilibrium of continuous media (films, membranes) subjected to surface tension.

Between 1930 and 1940, rigorous analytical developments emerged. Douglas (1931) \cite{Dou1931} provided the first proof of the existence of a surface of minimal area for a given contour. This rigorous proof, based on the minimization of the area functional, then allowed for the study of stability through the second variation. The development of global analysis contributed to results concerning existence, regularity, singularities, and bubbling (\cite{DHT1992}).

Until 1980, only six types of minimal surfaces in Euclidean space were known: the plane, the catenoid, the helicoid, the Enneper surface (1863), and the two types of Scherk surfaces (1834). However, in 1981, William Meeks, building on earlier work by Celso José da Costa, published a new family \cite{Mee1981}. Dozens more soon followed. Advances in computer science have facilitated these discoveries by increasing computational power.

Moreover, minimal surfaces play a very important role in capillarity, because the energy associated with surface tension is proportional to the surface area. The system therefore seeks to minimize the interface area, subject to imposed constraints: liquid volume, contact with solids, gravity \cite{Del1986,Del1995,Del1999}. The mathematical formalization of stability translates directly into the stability of capillary bridges \cite{Vog1987,Vog1989,Vog2012,GM2014}. Adding experimental approaches allows the establishment of various properties \cite{GM2014,GMMEY2016}, allows to classify capillary bridges, and allows the calculation of binding forces, areas, or volumes, revisiting the stability criteria \cite{Mil2015}. The accuracy of these models has made it possible to incorporate gravitational effects and bending (related to Gaussian curvature), thus providing the interparticle strength, bending resistance, and capillary force of a profile under bending \cite{MG2023}.

We also find minimal surfaces for modelisation interfaces/membranes in non-homogeneous or curved media relevant for complex materials, biophysics, and engineering \cite{FW2021}. Moreover Computer graphics and image analysis use minimal surfaces for boundary detection, and to construct surfaces that are visually appealing \cite{CFH1988,MS1995,MS1997}. The interested reader could find graduate texts on minimal surfaces in \cite{Law1980,Oss1986,DHK1992,DHKW1992,MP2011}, and an account of the history in \cite{Nit1989}.

In the usual \emph{Eulerian formulations} of minimal surfaces \cite{GHL2004,Cal2014,Sch2015,CHO2019,Mar2014,Li2025}, variations are computed directly on the closed surface $\Omega$, embedded in the three-dimensional Euclidean space $(\espace,\bq)$. This requires extending the objects to an open neighborhood in $\RR^3$, raising the question of how the results depend on the choice of this extension. To overcome these difficulties, we propose a Lagrangian reformulation based on the concept of \emph{pull-back covariant derivative}. This formalism appear to us natural and simpler than what is done elsewhere.

Using this formalism, we show that the invariance of the problem under re-parameterization leads to the fact that only normal variations contribute, and we calculate them in this Lagrangian framework. We then examine in detail the classical example of a soap film spanning two circular boundaries.


\subsection*{Outline}

In \autoref{sec:lagrangian-framework}, we introduce the Lagrangian formalism and we define pull-back covariant derivatives. In \autoref{sec:variational-problem}, we introduce pullback covariant derivatives in variation calculus and prove, using a geometric argument, that only normal variation contribute to the problem. First variations and second variations of the area functional are then calculated in details within this formalism, respectively in \autoref{sec:first-variation} and \autoref{sec:second-variation}. We conclude in \autoref{sec:soap-bubbles} by the detailed study of a soap bubble spanning two circular boundaries.

\section{A Lagrangian framework for embedded surfaces}
\label{sec:lagrangian-framework}

Let $\body$ be a compact, oriented two-dimensional manifold with boundary and $(\espace,\bg)$, a Riemannian manifold of dimension 3. The Levi-Civita connection on $(\espace,\bg)$ is denoted by $\nabla$. Later, we will choose for $\espace$ the Euclidean space but, keeping the general situation in the formulation of the problem, helps us avoid to hinder the deep understanding of the problem by useless oversimplifications. We will consider embeddings
\begin{equation*}
  \pp \colon\body\longmapsto \espace,
\end{equation*}
and set $\Sigma = \pp(\body)$. The manifold $\body$ is usually called in Mechanics the ``body'' and just labels the particles. Here, it will serve as a topological model of the embedded surface. We will denote coordinates on $\body$ by $(u^{\alpha})$ ($\alpha = 1,2$) and coordinates on $\espace$ by small letters $(x^{i})$ ($i = 1,2,3$).

To each embedding $\pp$ corresponds a pull-back metric on the body $\body$ given by
\begin{equation}\label{def:bgamma}
  \bgamma := \pp^{\ast}\bg, \qquad \gamma_{\alpha\beta} = \pd{\pp^{i}}{u^{\alpha}} \pd{\pp^{j}}{u^{\beta}} g_{ij}
\end{equation}
The corresponding Levi-Civita covariant derivative will be denoted by $\nabla^{\bgamma}$.
\begin{rem}\label{rem:isometry}
  By its very definition, the embedding $\pp$ is an isometry between the Riemannian manifolds $(\body,\bgamma)$ and $(\espace,\bg)$. If $(\ee_I)$ is an orthonormal frame on $\body$, then so is $(T\pp.\ee_I)$ on $\Sigma = \pp(\body)$.
\end{rem}

Given a point $m\in \Sigma$, the tangent space $T_{m}\Sigma$ is a codimension 1 subspace of $T_{m}\espace$ and there are exactly two choices for a unit normal vector $\nn$. Since we have assumed that $\body$ is oriented, the embedding $\pp$ induces an orientation on $\Sigma$ and we shall choose the unit normal vector $\nn$, such that it completes any direct basis of $T_{m}\Sigma$ into a direct basis of $T_{m}\espace$.

The \emph{Riemannian volume form} on $(\espace,\bg)$ is given by
\begin{equation*}
  \vol_\bg := \sqrt{\det\bg}\,\rd x^{1} \wedge \rd x^{2} \wedge\rd x^{3},
\end{equation*}
where $\wedge$ is the wedge product. Therefore, the area element on $\Sigma$ is the two-form on $\Sigma$ given by
\begin{equation*}
  \iota_{\nn}\vol_\bg = \nn \cdot \vol_\bg
\end{equation*}
where $\iota_{\nn}$ is the inner product (or contraction with $\nn$). In the local coordinate system $(u^{\alpha})$, it is written as
\begin{equation*}
  \vol_{\bgamma} = \sqrt{\det\bgamma}\,\rd u^{1} \wedge \rd u^{2}.
\end{equation*}

In order to formulate correctly the notion of covariant derivative of vector fields which are defined only on $\Sigma \subset \espace$, we shall introduce first the notion of \emph{pullback bundle}~\cite{Hus1994}.

\begin{defn}\label{def:pullback-bundle}
  Let $\pp \colon \body \to \espace$ be a smooth mapping. We define the pullback bundle by $\pp$ of the tangent bundle $T\espace$ as the set
  \begin{equation*}
    \pp^{*} T\espace := \bigsqcup_{\bb \in \body} T_{\pp(\bb)}\espace.
  \end{equation*}
  It is a vector bundle above $\body$.
\end{defn}

A \emph{vector field defined along $\pp$} is a section of this bundle, that is a mapping $\bxi \colon \body \to \pp^{*} T\espace$, such that $\bxi(\bb) \in T_{\pp(\bb)}\espace$, for each $\bb \in \body$. In other words, $\bxi$ is a vector field defined only on $\Sigma = \pp(\body)$. If $\bX$ is a vector field on $\espace$, then $\bxi(\bb) := \bX(\pp(\bb))$ is such a section but not all sections of the pullback bundle can be written this way. For instance the normal $\nn$, which is a vector field defined along $\pp$, cannot. The space of sections of $\pp^{*} T\espace$ will be denoted by $\bGamma(p^{*}T\espace)$.

The covariant derivative $\nabla$ on $\espace$ extends uniquely into a covariant derivative on sections of the pullback bundle $\pp^{*}T\espace$. It is denoted by $\nabla^{\pp}$ and called the \emph{pullback of $\nabla$} by $\pp$. It is uniquely characterized by the following property
\begin{equation*}
  (\pp^{*}\nabla)_{\bA} (\bX \circ \pp) = \left( \nabla_{T\pp.\bA} \bX \right)\circ \pp
\end{equation*}
for every vector fields $\bX \in \Vect(\espace)$ and $\bA \in \Vect(\body)$. A local expression of this pullback covariant derivative can be calculated as follows. Let $(u^{\alpha})$ and $(x^{i})$ be local coordinate systems on $\body$ and $\espace$ respectively. Then
\begin{align*}
  \nabla^{\pp}_{\bA} \bxi = \left(A^{\alpha} \pd{\xi^{k}}{u^{\alpha}} + (\Gamma_{ij}^{k} \circ \pp) \pd{\pp^{i}}{u^{\alpha}} A^{\alpha}\xi^{j}\right) \pdi{k}.
\end{align*}
where $\bxi$ is a vector filed defined along $\pp$ and $\Gamma_{ij}^{k}$ are the Christoffel symbols of $\nabla$. In particular, the components of $\nabla^{\pp}$ in these coordinate systems are written as
\begin{equation}\label{eq:pull-back-Christoffel}
  \left(\nabla^{\pp}_{\pdi{\alpha}} \pdi{j}\right)^{k} = (\Gamma_{ij}^{k} \circ \pp) \pd{\pp^{i}}{u^{\alpha}} .
\end{equation}

This relation between Christoffel symbols of the covariant derivative $\nabla$ on $\espace$ and the components \eqref{eq:pull-back-Christoffel} of the pullback derivative $\nabla^{\pp}$ leads to the following relation between the \emph{curvature tensor} tensors $\bR$ of $\nabla$ and $\bR^{\pp}$ of $\nabla^{\pp}$:
\begin{equation}\label{eq:pullback-curvature}
  \bR^{\pp}(\bA,\bB)\bxi = \bR(Tp.\bA,Tp.\bB)\bxi,
\end{equation}
where $\bA,\bB\in\Vect(\body)$ and $\bxi$ is a vector field defined along $\pp$.

Given $\bA, \bB \in \Vect(\body)$, the \emph{Gauss formula} corresponds to the orthogonal decomposition
\begin{equation}\label{eq:Gauss-formula}
  \nabla^{\pp} _{\bA}Tp.\bB = \left(\nabla^{\pp} _{\bA}Tp.\bB\right)^{\top} + \left(\nabla^{\pp} _{\bA}Tp.\bB\right)^{\perp},
\end{equation}
where $\cdot^{\top}$ is the orthogonal projection on $T\Sigma$ and $\cdot^{\bot}$, the orthogonal projection on its orthogonal complement. It defines, on one hand, the Riemannian covariant derivative
\begin{equation}\label{eq:link-covariant-derivatives}
  \nabla^{\bgamma}_{\bA}\bB = (Tp)^{-1}\left(\nabla^{\pp} _{\bA} Tp.\bB\right)^{\top},
\end{equation}
and, on the other hand, the \emph{second fundamental form}
\begin{equation}\label{eq:second-fundamental-form}
  \left(\nabla^{\pp} _{\bA}Tp.\bB\right)^{\perp} = \bQ(\bA,\bB)\nn, \quad \text{where} \quad \bQ(\bA,\bB) := \bg\left(\nabla^{\pp}_{\bA}Tp.\bB,\nn \right) = - \bg\left(Tp.\bB,\nabla^{\pp}_{\bA}\nn\right).
\end{equation}
One can check that $\bQ$ is a symmetric bilinear form on $T\body$ \cite{GHL2004}. Its representation relatively to the \emph{first fundamental form}, that is, the metric $\bgamma$ is given by the \emph{Weingarten operator}
\begin{equation*}
  \mathcal{S}.\bA := -(Tp)^{-1}.\nabla^{\pp}_{\bA}\nn,
\end{equation*}
which is an endomorphism of $T\body$ such that
\begin{equation}\label{eq:Weingarten}
  \bQ(\bA,\bB) = \bgamma(\mathcal{S}.\bA, \bB) = \bgamma(\bA, \mathcal{S}.\bB).
\end{equation}

Its trace $H := \tr \mathcal{S}$ is the \emph{mean curvature}, whereas its determinant $K := \det \mathcal{S}$ is the \emph{Gauss curvature}. The latest depends only of $\bgamma = \pp^{*}\bg$ and not explicitly of the embedding $\pp$ (\emph{Theorema Egregium}). The curvature tensors $\bR$ and $\bR^{\bgamma}$ are related to the second fundamental form by the Gauss equation for curvature
\begin{equation*}
  \bR\left(\pd{\pp}{u^{1}},\pd{\pp}{u^{2}},\pd{\pp}{u^{1}},\pd{\pp}{u^{2}}\right) = \bR^{\bgamma}(\pdi{u^{1}},\pdi{u^{2}},\pdi{u^{1}},\pdi{u^{2}}) + \bQ(\pdi{u^{1}},\pdi{u^{1}}) \bQ(\pdi{u^{2}},\pdi{u^{2}}) - \bQ(\pdi{u^{1}},\pdi{u^{2}})^{2}.
\end{equation*}

\section{The variational problem for minimal surfaces}
\label{sec:variational-problem}

Let us first recall some basic definitions in variational calculus. Given $\pp \in \Emb(\body,\espace)$, a variation at $\pp$ is obtained by choosing a path $\p(s) \in \Emb(\body,\espace)$ with $\p(0) = \pp$ and setting
\begin{equation*}
  \delta \pp = \partial_{s} \p(s)_{\mid_{s=0}}.
\end{equation*}
One obtains this way a vector field defined along $\pp$, that is an element of $\Gamma(\pp^{*}\bT\espace)$ and not necesserely tangent to the surface $\Sigma_{t} = \pp(t)(\body)$. To calculate second-order variations, one needs to introduce mappings
\begin{equation*}
  ]-\varepsilon,\varepsilon[ \times ]-\varepsilon,\varepsilon[ \times \body \to \espace, \qquad (t,s,\bb) \mapsto \p(t,s,\bb),
\end{equation*}
where $\pp(t,s)$ is an embedding from $]-\varepsilon,\varepsilon[ \times ]-\varepsilon,\varepsilon[ \times \body$ into $\espace$. Taking the first variation by deriving in $s$ and evaluating at $s=0$, we obtain a vector field
\begin{equation*}
  \delta \pp(t) = \partial_{s} \p(t,s)_{\mid_{s=0}},
\end{equation*}
depending on $t$ and defined on the surface $\Sigma_{t} = \pp(t)(\body)$. Calculating the second variation seems thus tedious ! However, we can use the framework we have introduced to define the covariant derivative of vector fields defined along $\pp$ to solve this difficulty. More precisely, we shall introduce the product manifold $I \times I \times \body$, where $I = ]-\varepsilon,\varepsilon[$ and use the pullback covariant derivative defined along the extended mapping
\begin{equation*}
  \p \colon I \times I \times \body \to \espace, \qquad  (t,s,\bb) \mapsto \p(t,s,\bb),
\end{equation*}
that we shall continue to denote by $\nabla^{\pp}$ to avoid the inflation of notations. We can thus calculate $\nabla^{\pp}_{\pdi{t}} \delta \pp$ and $\nabla^{\pp}_{\pdi{u^{\alpha}}} \delta \pp$, and more generally $\nabla^{\p}_{\pdi{t}} \bxi$, $\nabla^{\p}_{\pdi{s}} \bxi$ and $\nabla^{\p}_{\pdi{u^{\alpha}}} \bxi$ for every vector field $\bxi$ defined along $\p \colon I \times I\times \body \to \espace$. Of course, this construction is not limited to vector fields but is valid for any tensor field and extends straightforwardly to higher order variations.

\begin{rem}\label{rem:commutation-rules}
  Since $\nabla$ is symmetric, we get
  \begin{equation*}
    \nabla^{\p}_{\pdi{s}} \pd{\p}{t} = \nabla^{\p}_{\pdi{t}} \pd{\p}{s}.
  \end{equation*}
  Moreover, we have
  \begin{equation*}
    \nabla^{\p}_{\pdi{s}} \nabla^{\p}_{\pdi{t}} \bxi - \nabla^{\p}_{\pdi{t}} \nabla^{\p}_{\pdi{s}} \bxi = \bR\left(\pd{\p}{s}, \pd{\p}{t}\right)\bxi,
  \end{equation*}
  for every vector field $\bxi$ defined along $\p \colon I \times I\times \body \to \espace$, and the same holds if one replaces $\pdi{s}$ or $\pdi{t}$ by $\pdi{u^{\alpha}}$.
\end{rem}

Given a one-dimensional curve $\mathcal{C}$ embedded in $\espace$, the minimal surface problem consists of minimising the functional
\begin{equation}\label{eq:area-functional}
  \mathcal{A}[p] = \int_\Sigma\iota_{\nn}\vol_{\bg} = \int_\body \pp^{\ast}(\iota_{\nn}\vol_{\bg}) = \int_\body\vol_{\bgamma}
\end{equation}
on the set of embeddings $\Emb(\body,\espace)$ such that
\begin{equation*}
  \pp(\partial \body) = \partial \Sigma = \mathcal{C}.
\end{equation*}

The functional \eqref{eq:area-functional} is invariant under re-parametrization. This means that for every orientation preserving diffeomorphism $\varphi \in \Diff(\body)$, we get
\begin{equation}\label{eq:invariance-relation}
  \mathcal{A}[p\circ \varphi] = \mathcal{A}[p].
\end{equation}
Indeed, we have
\begin{equation*}
  \mathcal{A}[p\circ \varphi] = \int_\body (p\circ \varphi)^{\ast}(\iota_{\nn}\vol_{\bg}) = \int_\body \varphi^{\ast} \left( \pp^{\ast} (\iota_{\nn}\vol_{\bg})\right) = \int_\body \pp^{\ast}(\iota_{\nn}\vol_{\bg}),
\end{equation*}
by the change of variable formula. This invariance leads to the following properties of its first and second variations.

\begin{thm}\label{thm:tangential-variations}
  (1) The first variation of the functional $\mathcal{A}$ vanishes on every tangential variation, which means that
  \begin{equation*}
    \rd_{\pp} \mathcal{A}.\delta \pp^{\top} = 0,
  \end{equation*}
  for all $\pp\in\Emb(\body,\espace)$ and $\delta \pp\in \bGamma(p^{*}T\espace)$.

  (2) At a critical point $\pp$ of $\mathcal{A}$, tangential components do not contribute to the second variation, which means that
  \begin{equation*}
    \hess_{\pp}\mathcal{A}(\delta_{1} \pp^{\top},\delta_{2} \pp) = 0,
  \end{equation*}
  for all $\delta_{1} \pp, \delta_{2} \pp \in\bGamma(p^{*}T\espace)$.
\end{thm}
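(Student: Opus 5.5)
The plan is to exploit the invariance relation \eqref{eq:invariance-relation} by differentiating it along one-parameter families of reparametrizations. The underlying idea is that every tangential variation is infinitesimally a reparametrization. Indeed, given $\delta\pp\in\bGamma(\pp^{*}T\espace)$, its tangential part can be written $\delta\pp^{\top} = T\pp.\bX$ for a unique vector field $\bX\in\Vect(\body)$, namely $\bX := (T\pp)^{-1}\delta\pp^{\top}$. Since the boundary curve $\mathcal{C}$ is fixed, admissible variations vanish on $\partial\body$, so $\bX$ vanishes on $\partial\body$. Its flow $\varphi_{s}$ is therefore a well-defined one-parameter group of orientation-preserving diffeomorphisms of $\body$ fixing $\partial\body$. (If one does not want to assume this, one still uses a family $\varphi_s$ with $\partial_s\varphi_s|_{s=0}=\bX$; the change of variables argument above holds regardless.)

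For (1), set $\p(s) := \pp\circ\varphi_{s}$. Then $\partial_{s}\p(s)|_{s=0} = T\pp.\bX = \delta\pp^{\top}$. By \eqref{eq:invariance-relation}, $\mathcal{A}[\p(s)]$ is constant in $s$, so $\rd_{\pp}\mathcal{A}.\delta\pp^{\top} = \frac{\rd}{\rd s}\big|_{s=0}\mathcal{A}[\p(s)] = 0$. Since $\pp$ was arbitrary, this holds at every embedding.

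For (2), I differentiate the identity of (1) once more. Take a path $\p(t)$ with $\p(0)=\pp$ and $\partial_{t}\p(0)=\delta_{2}\pp$. Along this path I build the two-parameter family
\begin{equation*}
  \p(t,s) := \p(t)\circ\varphi_{s}, \qquad \varphi_{s} \text{ the flow of } \bX = (T\pp)^{-1}\delta_{1}\pp^{\top}.
\end{equation*}
Here $\bX$ is kept independent of $t$, and I use the pullback connection $\nabla^{\p}$ on $I\times I\times\body$. By invariance, $\mathcal{A}[\p(t,s)] = \mathcal{A}[\p(t)]$, so $\partial_{s}\mathcal{A}[\p(t,s)]|_{s=0}=0$ for every $t$. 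Differentiating in $t$ at $t=0$ gives $\partial_{t}\partial_{s}\mathcal{A}[\p(t,s)]|_{0,0} = 0$.

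The covariant second derivative satisfies
\begin{equation*}
  \partial_{t}\partial_{s}\mathcal{A}[\p] = \hess_{\pp}\mathcal{A}\left(\partial_{s}\p,\partial_{t}\p\right) + \rd_{\pp}\mathcal{A}.\nabla^{\p}_{\pdi{t}}\partial_{s}\p .
\end{equation*}
At a critical point the last term vanishes. By construction $\partial_{s}\p|_{(0,0)} = T\pp.\bX = \delta_{1}\pp^{\top}$ and $\partial_{t}\p|_{(0,0)}=\delta_{2}\pp$. Hence $\hess_{\pp}\mathcal{A}(\delta_{1}\pp^{\top},\delta_{2}\pp)=0$. Remark~\ref{rem:commutation-rules} (symmetry of $\nabla$) ensures the Hessian is symmetric, so it does not matter which slot carries the tangential argument.

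The main obstacle is making precise what $\hess_{\pp}\mathcal{A}$ means and justifying the decomposition displayed above. Concretely, one must show that $\partial_t\partial_s\mathcal{A}$ splits into a bilinear part depending only on the first-order data $\partial_s\p,\partial_t\p$, plus $\rd\mathcal{A}$ applied to the mixed covariant derivative $\nabla^{\p}_{\pdi{t}}\partial_s\p$. I would prove this by a chain-rule computation with the pullback connection, or simply take it as the definition of the covariant Hessian on $\Emb(\body,\espace)$. The criticality hypothesis is exactly what removes the dependence on the choice of the second-order term, which otherwise would not vanish: for instance, $\nabla^{\p}_{\pdi{t}}\partial_s\p$ is generally nonzero because $T\p(t).\bX$ changes with $t$.
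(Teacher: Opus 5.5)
Your proof is correct and is essentially the paper's argument: (1) uses the flow of $\bX=(T\pp)^{-1}\delta\pp^{\top}$ together with the invariance \eqref{eq:invariance-relation}, and (2) differentiates that identity once more. The mixed derivative then splits into the Hessian plus $\rd_{\pp}\mathcal{A}$ applied to a covariant second-order term, which criticality kills. The only difference is that your frozen-$\bX$ family $\p(t)\circ\varphi_{s}$ makes $\mathcal{A}$ exactly independent of $s$. The paper instead applies (1) to the tangential part $\delta\pp(t)^{\top}$ of an arbitrary two-parameter family and differentiates that projection.

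Your requirement that $\bX$ vanish on (or at least be tangent to) $\partial\body$ is genuinely needed. Compactness alone, which the paper invokes, does not give a global flow on a manifold with boundary. Moreover, in general $\rd_{\pp}\mathcal{A}.(T\pp.\bX)=\int_{\partial\body}\bgamma(\bX,\nu)\,\rd\ell$, where $\nu$ is the outward unit conormal. So drop your parenthetical fallback: for $\bX$ transverse to $\partial\body$, no family of diffeomorphisms of $\body$ has velocity $\bX$, and the conclusion of (1) actually fails.
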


\begin{proof}
  Note first that every tangential variation $\delta \pp^{\top}$ at $\pp\in\Emb(\body,\espace)$ can be written as $T\pp.\bA$, where
  \begin{equation*}
    \bA := (T\pp)^{-1}. \delta \pp^{\top}
  \end{equation*}
  is a vector field on $\body$.

  (1) Let $\bA \in \Vect(\body)$. Its flow $\varphi(s)$ is a global diffeomorphism defined for all $s \in \RR$, since we assume that $\body$ is compact. From \eqref{eq:invariance-relation}, we have moreover
  \begin{equation*}
    \mathcal{A}[p\circ \varphi(s)] = \mathcal{A}[p], \qquad \forall s,
  \end{equation*}
  and taking the derivative at $s=0$, we get
  \begin{equation*}
    \rd_{\pp} \mathcal{A}.(T\pp.\bA) = 0.
  \end{equation*}
  Since this is true for all $\bA \in \Vect(\body)$, we deduce that
  \begin{equation*}
    \rd_{\pp} \mathcal{A}.\delta \pp^{\top} = 0,
  \end{equation*}
  for all $\delta \pp\in\bGamma(p^{*}T\espace)$.

  (2) Starting this time with a two parameters family of embeddings $\p(t,s) \in \Emb(I\times I\times\body,\espace)$, with
  \begin{equation*}
    \p(0,0) = \pp, \qquad \delta_{1} \pp := \partial_{s} \p(t,s)_{\mid_{t=0,s=0}}, \qquad \delta_{2} \pp := \partial_{t} \p(t,s)_{\mid_{t=0,s=0}},
  \end{equation*}
  we get first, as in (1),
  \begin{equation*}
    \rd_{\pp(t)}\mathcal{A}.\delta \pp(t)^{\top}  = 0, \qquad \forall t,
  \end{equation*}
  where $\pp(t) = \p(t,0)$ and $\delta \pp(t) = \partial_{s} \p(t,s)_{\mid_{s=0}}$. Deriving this last equality at $t=0$ leads then to
  \begin{equation*}
    \partial_{t} \left(\rd_{\pp(t)}\mathcal{A}.\delta_{1} \pp^{\top}\right)_{\mid_{t=0}} + \rd_{\pp}\mathcal{A}. \left(\nabla^{\pp} _{\partial_{t}}\delta_{1} \pp ^T\right)_{\mid_{t=0}} = 0,
  \end{equation*}
  where
  \begin{equation*}
    \partial_{t} \left(\rd_{\pp(t)}\mathcal{A}.\delta_{1} \pp^{\top}\right)_{\mid_{t=0}} = \hess_{\pp} \mathcal{A} (\delta_{2} \pp, \delta_{1} \pp^{\top})
  \end{equation*}
  is the second variation of $\mathcal{A}$ in the directions $\delta_{2} \pp$, $\delta_{1} \pp^{\top}$. If moreover, $\pp$ is a critical point of $\mathcal{A}$, we obtain finally, using the symmetry of the Hessian, that
  \begin{equation*}
    \hess_{\pp} \mathcal{A} (\delta_{1} \pp^{\top},\delta_{2} \pp) = 0,
  \end{equation*}
  for all variations $\delta_{1} \pp,\delta_{2} \pp \in \bGamma(\pp^{*}T\espace)$.
\end{proof}

\section{First variation formula}
\label{sec:first-variation}

An embedded hypersurface $\Sigma = \pp(\body)$ is called a \emph{minimal surface} if the embedding $\pp$ is a critical point of the area functional \eqref{eq:area-functional}. In this section, we shall use our Lagrangian formalism to deduce the well known result that a minimal surface is characterized by the vanishing of the mean curvature
\begin{equation*}
  H=0.
\end{equation*}

However a critical point of \eqref{eq:area-functional} is not necessary a (local) minimum. The calculation of the second variation is required to check this statement and will be carried in \autoref{sec:second-variation}.

\begin{lem}[First variation of the metric $\bgamma$]\label{lem:delta-gamma}
  The first variation of the Riemannian metric $\bgamma = \pp^{*}\bg$ on $\body$ is given by
  \begin{equation*}
    (\delta\bgamma)_{\alpha\beta} = \bg\left(\nabla^{\pp}_{\pdi{u^{\alpha}}}\delta\pp,\pd{\pp}{u^{\beta}}\right) + \bg\left(\pd{\pp}{u^{\alpha}},\nabla^{\pp}_{\pdi{u^{\beta}}}\delta\pp\right)
  \end{equation*}
  In particular, for a normal variation $\delta\pp^{\perp}$, one gets
  \begin{equation*}
    \delta \bgamma = -2\bg(\delta\pp^{\perp},\nn)\bQ.
  \end{equation*}
\end{lem}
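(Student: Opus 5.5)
We work with a one-parameter family $\p(s)$ with $\p(0)=\pp$ and $\delta\pp = \partial_s \p_{\mid s=0}$, viewed as a single map $\p\colon I\times\body\to\espace$. The pullback covariant derivative $\nabla^{\p}$ is taken along this map, as in \autoref{sec:variational-problem}. By \eqref{def:bgamma}, the pulled-back metric is
\begin{equation*}
  \bgamma(s)_{\alpha\beta} = \bg\left(\pd{\p}{u^{\alpha}},\pd{\p}{u^{\beta}}\right),
\end{equation*}
evaluated along $\p$. Since $\nabla$ is metric, its pullback is compatible with $\bg$ along $\p$. Differentiating in $s$ therefore gives
\begin{equation*}
  \partial_s \bgamma_{\alpha\beta} = \bg\left(\nabla^{\p}_{\pdi{s}}\pd{\p}{u^{\alpha}},\pd{\p}{u^{\beta}}\right) + \bg\left(\pd{\p}{u^{\alpha}},\nabla^{\p}_{\pdi{s}}\pd{\p}{u^{\beta}}\right).
\end{equation*}
Next we invoke the symmetry rule of \autoref{rem:commutation-rules}, with $\pdi{t}$ replaced by $\pdi{u^{\alpha}}$. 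This gives $\nabla^{\p}_{\pdi{s}}\pd{\p}{u^{\alpha}} = \nabla^{\p}_{\pdi{u^{\alpha}}}\pd{\p}{s}$. Evaluating at $s=0$ yields the first formula.

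For the normal case, write $\delta\pp^{\perp} = f\,\nn$ with $f := \bg(\delta\pp^{\perp},\nn)$. The Leibniz rule gives
\begin{equation*}
  \nabla^{\pp}_{\pdi{u^{\alpha}}}(f\nn) = \pdi{u^{\alpha}} f\,\nn + f\,\nabla^{\pp}_{\pdi{u^{\alpha}}}\nn .
\end{equation*}
The first term is normal, so it is orthogonal to $\pd{\pp}{u^{\beta}} = T\pp.\pdi{u^{\beta}}$ and drops out of both inner products. For the remaining term we use \eqref{eq:second-fundamental-form}:
\begin{equation*}
  \bg\left(\nabla^{\pp}_{\pdi{u^{\alpha}}}\nn,\, T\pp.\pdi{u^{\beta}}\right) = -\bQ_{\alpha\beta}.
\end{equation*}
The symmetric counterpart contributes $-\bQ_{\beta\alpha}$. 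Since $\bQ$ is symmetric, the two contributions combine to $\delta\bgamma_{\alpha\beta} = -2f\,\bQ_{\alpha\beta}$, which is the claim.

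The only point needing care is the justification that $\nabla^{\p}$ is metric compatible and torsion-free in the mixed sense used above. Both properties descend from those of $\nabla$ through the local formula \eqref{eq:pull-back-Christoffel}. Torsion-freeness reduces to the equality of mixed partials $\partial_s\partial_{u^\alpha}\p^k = \partial_{u^\alpha}\partial_s\p^k$, together with the symmetry $\Gamma^k_{ij}=\Gamma^k_{ji}$. This is already recorded in \autoref{rem:commutation-rules}, so it may be invoked directly.
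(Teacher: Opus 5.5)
Your proof is correct and follows the paper's argument: metric compatibility of $\nabla^{\p}$ and the commutation $\nabla^{\p}_{\pdi{s}}\pd{\p}{u^{\alpha}} = \nabla^{\p}_{\pdi{u^{\alpha}}}\pd{\p}{s}$ give the general formula, and the second fundamental form handles the normal case. The only cosmetic difference is in the normal case. You expand $\nabla^{\pp}_{\pdi{u^{\alpha}}}(f\nn)$ by Leibniz and use the Weingarten expression $\bQ(\bA,\bB) = -\bg(T\pp.\bB,\nabla^{\pp}_{\bA}\nn)$, whereas the paper differentiates $\bg(\delta\pp^{\perp},\pd{\pp}{u^{\beta}})=0$ and uses $\bQ(\bA,\bB)=\bg(\nabla^{\pp}_{\bA}T\pp.\bB,\nn)$; the two are equivalent by that same orthogonality relation.
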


\begin{proof}
  In a local coordinate system $(u^{\alpha})$ of $\body$, we have
  \begin{equation*}
    \gamma_{\alpha\beta} = \bg\left(\pd{\pp}{u^{\alpha}},\pd{\pp}{u^{\beta}}\right).
  \end{equation*}
  Hence, taking a variation $\p(s)$ with $\p(0) = \pp$ and $\partial_{s}\p_{\mid_{s=0}} = \delta \pp$ and
  \begin{equation*}
    \delta\bgamma = \partial_{s}\left(\p(s)^{*}\bg\right)_{\mid_{s=0}},
  \end{equation*}
  we get
  \begin{align*}
    (\delta\bgamma)_{\alpha\beta} & = \partial_{s} \bg\left(\pd{\p}{u^{\alpha}},\pd{\p}{u^{\beta}}\right)_{\mid_{s=0}}
    \\
                                  & = \bg\left(\nabla^{\p}_{\pdi{s}}\pd{\p}{u^{\alpha}},\pd{\p}{u^{\beta}}\right)_{\mid_{s=0}} + \bg\left(\pd{\p}{u^{\alpha}},\nabla^{\p}_{\pdi{s}}\pd{\p}{u^{\beta}}\right)_{\mid_{s=0}}
    \\
                                  & = \bg\left(\nabla^{\p}_{\pdi{u^{\alpha}}}\delta\pp,\pd{\p}{u^{\beta}}\right) + \bg\left(\pd{\p}{u^{\alpha}},\nabla^{\p}_{\pdi{u^{\beta}}}\delta\pp\right),
  \end{align*}
  using remark~\ref{rem:commutation-rules}. Suppose now that $\delta\pp = \delta\pp^{\bot}$ is a normal variation, then
  \begin{equation*}
    0 = \pdi{u^{\alpha}} \bg\left(\delta\pp^{\bot},\pd{\pp}{u^{\beta}}\right) = \bg\left(\nabla^{\pp}_{\pdi{u^{\alpha}}}\delta\pp^{\bot},\pd{\pp}{u^{\beta}}\right) + \bg\left(\delta\pp^{\bot},\nabla^{\pp}_{\pdi{u^{\alpha}}} \pd{\pp}{u^{\beta}}\right),
  \end{equation*}
  and thus
  \begin{align*}
    (\delta\bgamma)_{\alpha\beta} & = - \bg\left(\delta\pp^{\bot},\nabla^{\pp}_{\pdi{u^{\alpha}}} \pd{\pp}{u^{\beta}}\right) - \bg\left(\nabla^{\pp}_{\pdi{u^{\beta}}} \pd{\pp}{u^{\alpha}},\delta\pp^{\bot}\right)
    \\
                                  & = -\bg(\nn,\delta\pp^{\bot}) \left\{\bg\left(\nn,\nabla^{\pp}_{\pdi{u^{\alpha}}} \pd{\pp}{u^{\beta}}\right) + \bg\left(\nabla^{\pp}_{\pdi{u^{\beta}}} \pd{\pp}{u^{\alpha}},\nn\right) \right\}
    \\
                                  & = -2\bg(\nn,\delta\pp^{\bot}) \bQ(\pdi{u^{\alpha}},\pdi{u^{\beta}}),
  \end{align*}
  by definition of the second fundamental form \eqref{eq:second-fundamental-form}.
\end{proof}

Consider now the first variation of the area functional $\mathcal{A}$. We have
\begin{equation*}
  \delta\mathcal{A} = \rd_{\pp}\mathcal{A}.\delta\pp = \partial_{s}\mathcal{A}[\p(s)]_{\mid_{s=0}} = \pd{}{s}_{\mid_{s=0}} \int_{\Sigma_{s}}\iota_{\nn_{s}}\vol_{\bq} = \int_\body \left(\partial_{s}\vol_{\bgamma_{s}}\right)_{\mid_{s=0}},
\end{equation*}
with
\begin{equation}\label{eq:volume-form-variation}
  \left(\partial_{s}\vol_{\bgamma_{s}}\right)_{\mid_{s=0}} = \frac{1}{2} \tr \left(\bgamma^{-1} \cdot \delta\bgamma\right) \vol_{\bgamma}.
\end{equation}

Because only the normal part of $\delta \pp$ contribute to the variation of the area by theorem~\ref{thm:tangential-variations}, we can assume without loss of generality that the deformation is normal $\delta \pp=\delta \pp^{\perp}$, and we get then by lemma~\ref{lem:delta-gamma} that
\begin{equation*}
  \frac{1}{2} \tr \left(\bgamma^{-1} \cdot \delta\bgamma\right) = - \bg(\delta\pp^{\perp},\nn) \tr \left(\bgamma^{-1} \cdot \bQ\right) = - \bg(\delta\pp^{\perp},\nn) \tr \mathcal{S} = - \bg(\delta\pp^{\perp},\nn)H ,
\end{equation*}
where $\mathcal{S}$ is the Weingarten operator, defined by~\eqref{eq:Weingarten}, and $H = \tr \mathcal{S}$ is the mean curvature. We obtain finally
\begin{equation}\label{eq:first-variation-formula}
  \boxed{\delta\mathcal{A} = -\int_{\body}\bg\left(\delta \pp,\nn\right)H\vol_{\bgamma}}
\end{equation}
which vanishes for all variations $\delta\pp$ if and only if $H=0$.

\section{Second variation formula}
\label{sec:second-variation}

\begin{lem}[Variation of the normal $\nn$]\label{lem:delta-n}
  The first variation of the unitary normal $\nn$ is:
  \begin{equation*}
    \delta\nn = - T\pp. \left(\mathcal{S}.Tp^{-1}.\delta \pp^{\top} + \grad^{\bgamma}{f}\right) , \quad\text{where}\quad f = \bg(\delta\pp, \nn).
  \end{equation*}
\end{lem}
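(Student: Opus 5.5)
We interpret $\delta\nn$ as $\nabla^{\p}_{\pdi{s}}\nn_{s}$ at $s=0$, where $\p(s)$ is a variation with $\partial_{s}\p_{\mid_{s=0}}=\delta\pp$ and $\nn_{s}$ is the unit normal along $\p(s)$. The plan is to determine this vector field through its components: first its normal part, then its inner products with the tangent vectors $\pd{\pp}{u^{\beta}}$. Only the constraints defining $\nn_s$ (unit length, orthogonality to $\Sigma_s$) and the commutation rule of remark~\ref{rem:commutation-rules} will be used.

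\emph{Normal component.} Differentiate $\bg(\nn_{s},\nn_{s})=1$ in $s$ with the metric-compatible pullback derivative. This gives $\bg(\delta\nn,\nn)=0$, so $\delta\nn$ is tangent to $\Sigma$. We may therefore write $\delta\nn = T\pp.\bB$ for a unique $\bB\in\Vect(\body)$, and it suffices to compute $\bgamma(\bB,\pdi{u^{\beta}})=\bg(\delta\nn,\pd{\pp}{u^{\beta}})$.

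\emph{Tangential components.} Differentiate $\bg\left(\nn_{s},\pd{\p}{u^{\beta}}\right)=0$ in $s$ and use $\nabla^{\p}_{\pdi{s}}\pd{\p}{u^{\beta}}=\nabla^{\p}_{\pdi{u^{\beta}}}\delta\pp$. This yields
\begin{equation*}
  \bg\left(\delta\nn,\pd{\pp}{u^{\beta}}\right) = -\bg\left(\nn,\nabla^{\pp}_{\pdi{u^{\beta}}}\delta\pp\right).
\end{equation*}
Next split $\delta\pp = f\nn + T\pp.\bA$ with $f=\bg(\delta\pp,\nn)$ and $\bA=(T\pp)^{-1}\delta\pp^{\top}$, and expand the right-hand side.
\begin{itemize}
  \item For the normal piece, $\nabla^{\pp}_{\pdi{u^{\beta}}}(f\nn)=(\partial_{\beta}f)\nn+f\nabla^{\pp}_{\pdi{u^{\beta}}}\nn$. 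Since $\bg(\nn,\nabla^{\pp}\nn)=0$, its pairing with $\nn$ is just $\partial_{\beta}f$.
  \item For the tangential piece, the Gauss formula~\eqref{eq:Gauss-formula} and the definition~\eqref{eq:second-fundamental-form} give $\bg\left(\nn,\nabla^{\pp}_{\pdi{u^{\beta}}}T\pp.\bA\right)=\bQ(\pdi{u^{\beta}},\bA)$.
\end{itemize}
By~\eqref{eq:Weingarten}, this equals $\bgamma(\mathcal{S}.\bA,\pdi{u^{\beta}})$. Moreover $\partial_{\beta}f=\bgamma(\grad^{\bgamma}f,\pdi{u^{\beta}})$ by definition of the gradient. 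Altogether,
\begin{equation*}
  \bgamma(\bB,\pdi{u^{\beta}}) = -\bgamma\left(\mathcal{S}.\bA+\grad^{\bgamma}f,\pdi{u^{\beta}}\right)
\end{equation*}
for each $\beta$. Hence $\bB=-(\mathcal{S}.\bA+\grad^{\bgamma}f)$, which is the claimed formula after applying $T\pp$.

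\emph{Main obstacle.} The computation is routine. The step needing care is justifying that $\nn_{s}$ is a smooth section along the extended map $\p$, so that the $s$-derivative can be exchanged with the $u^\beta$-derivatives via remark~\ref{rem:commutation-rules}. This holds because $\nn_s$ is determined pointwise by the smooth orthonormalisation of the frame $\left(\pd{\p}{u^{1}},\pd{\p}{u^{2}}\right)$ together with the orientation. A second point is the sign convention in the identity $\bQ(\bA,\bB)=-\bg(T\pp.\bB,\nabla^{\pp}_{\bA}\nn)$ from~\eqref{eq:second-fundamental-form}: one must check that the tangential piece indeed contributes $+\bQ$ and not $-\bQ$ before moving it across the minus sign.
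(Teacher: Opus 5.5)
Your proposal is correct and follows the paper's proof essentially step by step: unit length gives tangency of $\delta\nn$, differentiating $\bg(\nn_{s},\pd{\p}{u^{\beta}})=0$ with the symmetry $\nabla^{\p}_{\pdi{s}}\pd{\p}{u^{\beta}}=\nabla^{\p}_{\pdi{u^{\beta}}}\pd{\p}{s}$ gives the tangential components, and the split $\delta\pp=f\nn+T\pp.\bA$ produces $\partial_{\beta}f$ and $\bQ(\pdi{u^{\beta}},\bA)=\bgamma(\mathcal{S}.\bA,\pdi{u^{\beta}})$. The only cosmetic difference is that you evaluate the tangential piece through $\bg(\nabla^{\pp}_{\pdi{u^{\beta}}}T\pp.\bA,\nn)$, whereas the paper moves the derivative onto $\nn$ and uses $-\bg(\nabla^{\pp}_{\pdi{u^{\beta}}}\nn,\delta\pp^{\top})$; these are the two equal expressions in \eqref{eq:second-fundamental-form}, and your sign check comes out right.
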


\begin{proof}
  As $\norm{\nn}^{2}=1$, we have $\bg(\delta\nn,\nn)=0$, where $\delta\nn = \left(\nabla^{\pp}_{\pdi{s}} \nn\right)_{\mid_{s=0}}$. Hence, given any local coordinate system $(u^{\alpha})$ of $\body$, we get
  \begin{equation*}
    \delta \nn = (\delta \nn)^{1} \pd{\pp}{u^{1}} + (\delta \nn)^{2} \pd{\pp}{u^{2}}.
  \end{equation*}
  But
  \begin{equation*}
    0 = \partial_{s}\bg\left(\nn,\pd{\p}{u^{\beta}}\right)_{\mid_{s=0}} = \bg\left(\delta\nn,\pd{\pp}{u^{\beta}}\right) + \bg\left(\nn,\nabla^{\pp}_{\pdi{u^{\beta}}}\delta\pp\right),
  \end{equation*}
  where we have used
  \begin{equation*}
    \nabla^{\p}_{\pdi{s}} \pd{\p}{u^{\beta}} = \nabla^{\p}_{\pdi{u^{\beta}}} \pd{\p}{s}.
  \end{equation*}
  We have therefore
  \begin{equation*}
    (\delta \nn)^{\alpha}\gamma_{\alpha\beta} = (\delta \nn)^{\alpha} \bg\left(\pd{\pp}{u^{\alpha}},\pd{\pp}{u^{\beta}}\right) = \bg\left(\delta\nn,\pd{\pp}{u^{\beta}}\right) = - \bg\left(\nn,\nabla^{\pp}_{\pdi{u^{\beta}}}\delta\pp\right).
  \end{equation*}
  Now, writing $\delta \pp = \delta \pp^{\top} + \delta \pp^{\perp} $, where $\delta \pp^{\perp} = f \nn$ and $f = \bg(\delta\pp, \nn)$, we get
  \begin{align*}
    \bg\left(\nn,\nabla^{\pp}_{\pdi{u^{\beta}}}\delta\pp\right) & = \bg\left(\nn,\nabla^{\pp}_{\pdi{u^{\beta}}}\delta\pp^{\top}\right) + \bg\left(\nn,\nabla^{\pp}_{\pdi{u^{\beta}}}\delta\pp^{\perp}\right)
    \\
                                                                & = - \bg\left(\nabla^{\pp}_{\pdi{u^{\beta}}}\nn,\delta\pp^{\top}\right) + \pdi{u^{\beta}} f
    \\
                                                                & =  \bgamma(\mathcal{S}.\pdi{u^{\beta}},(T\pp)^{-1}\delta\pp^{\top}) + \pdi{u^{\beta}} f
    \\
                                                                & =  \bgamma(\pdi{u^{\beta}},\mathcal{S}.(T\pp)^{-1}\delta\pp^{\top}) + \pdi{u^{\beta}} f
    \\
                                                                & = \gamma_{\beta\alpha} (\mathcal{S}.(T\pp)^{-1}\delta\pp^{\top})^{\alpha} + \pdi{u^{\beta}} f,
  \end{align*}
  from which we deduce that
  \begin{equation*}
    (\delta \nn)^{\alpha} = - \left(\mathcal{S}.(T\pp)^{-1}\delta\pp^{\top}\right)^{\alpha} - \gamma^{\alpha\beta}\pdi{u^{\beta}} f,
  \end{equation*}
  and thus that
  \begin{equation*}
    \delta \nn = (\delta \nn)^{\alpha} \pd{\pp}{u^{\alpha}} = T\pp.\left((\delta \nn)^{\alpha} \pdi{u^{\alpha}}\right) = -T\pp.\left( \mathcal{S}.(T\pp)^{-1}\delta\pp^{\top} + \grad^{\bgamma}f\right),
  \end{equation*}
  which ends the proof.
\end{proof}

\begin{cor}[Variation of the mean curvature $H$]\label{cor:delta-H}
  A normal variation of the mean curvature $H$ is written as
  \begin{equation*}
    \delta H = \triangle^{\bgamma} f + f \norm{\mathcal{S}}^{2}_{\bgamma} - f\Ric\left(\nn, \nn\right), \quad \text{where} \quad \delta \pp = f \nn.
  \end{equation*}
\end{cor}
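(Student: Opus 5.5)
Write $H = \tr(\bgamma^{-1}\bQ) = \gamma^{\alpha\beta}Q_{\alpha\beta}$ in a local coordinate system $(u^{\alpha})$ of $\body$. The plan is to vary this expression along a normal variation $\p(s)$ with $\delta\pp = f\nn$, and then treat the two resulting terms separately:
\begin{equation*}
  \delta H = (\delta \gamma^{\alpha\beta})\, Q_{\alpha\beta} + \gamma^{\alpha\beta}\,\delta Q_{\alpha\beta}.
\end{equation*}

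\textbf{Step 1 (variation of the inverse metric).} From $\delta(\gamma^{\alpha\beta}) = -\gamma^{\alpha\mu}(\delta\gamma_{\mu\nu})\gamma^{\nu\beta}$ and Lemma~\ref{lem:delta-gamma}, which gives $\delta\bgamma = -2f\bQ$ for a normal variation, we get
\begin{equation*}
  \delta(\gamma^{\alpha\beta})Q_{\alpha\beta} = 2f\,\gamma^{\alpha\mu}\gamma^{\nu\beta}Q_{\mu\nu}Q_{\alpha\beta} = 2f\norm{\mathcal{S}}^{2}_{\bgamma}.
\end{equation*}

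\textbf{Step 2 (variation of $\bQ$).} Write $Q_{\alpha\beta} = \bg(\nabla^{\p}_{\pdi{u^{\alpha}}}\partial_{\beta}\p,\nn)$ and differentiate in $s$. This produces two terms.

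The first term is $\bg(\nabla^{\p}_{\pdi{u^{\alpha}}}\partial_{\beta}\p,\delta\nn)$. Lemma~\ref{lem:delta-n} gives $\delta\nn = -T\pp.\grad^{\bgamma}f$, since $\delta\pp^{\top}=0$. Only the tangential part of $\nabla^{\pp}_{\alpha}\partial_{\beta}\pp$ contributes, and by \eqref{eq:link-covariant-derivatives} this part is $T\pp.(\Gamma^{\mu}_{\alpha\beta}\partial_{\mu})$, with $\Gamma^{\mu}_{\alpha\beta}$ the Christoffel symbols of $\bgamma$. So the first term equals $-\Gamma^{\mu}_{\alpha\beta}\partial_{\mu}f$.

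The second term is $\bg(\nabla^{\p}_{\pdi{s}}\nabla^{\p}_{\pdi{u^{\alpha}}}\partial_{\beta}\p,\nn)$. Commute the derivatives using Remark~\ref{rem:commutation-rules}, twice:
\begin{equation*}
  \nabla^{\p}_{s}\nabla^{\p}_{\alpha}\partial_{\beta}\p = \nabla^{\p}_{\alpha}\nabla^{\p}_{\beta}(f\nn) + \bR(\partial_{s}\p,\partial_{\alpha}\p)\partial_{\beta}\p.
\end{equation*}
Expand $\nabla_{\alpha}\nabla_{\beta}(f\nn) = (\partial_{\alpha}\partial_{\beta}f)\nn + \partial_{\beta}f\,\nabla_{\alpha}\nn + \partial_{\alpha}f\,\nabla_{\beta}\nn + f\nabla_{\alpha}\nabla_{\beta}\nn$, then pair with $\nn$:
\begin{itemize}
  \item The middle two terms vanish, because $\nabla\nn$ is tangent (as $\bg(\nn,\nn)=1$).
  \item For the last term, differentiate $\bg(\nabla_{\beta}\nn,\nn)=0$ to get $\bg(\nabla_{\alpha}\nabla_{\beta}\nn,\nn) = -\bg(\nabla_{\beta}\nn,\nabla_{\alpha}\nn) = -(\bQ\bgamma^{-1}\bQ)_{\alpha\beta}$, using $\nabla^{\pp}_{\alpha}\nn = -T\pp.\mathcal{S}\partial_{\alpha}$.
  \item The curvature term gives $f\,\bR(\nn,\partial_{\alpha}\pp,\partial_{\beta}\pp,\nn)$.
\end{itemize}

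\textbf{Step 3 (trace and sum).} Contract with $\gamma^{\alpha\beta}$:
\begin{itemize}
  \item $\gamma^{\alpha\beta}(\partial_{\alpha}\partial_{\beta}f - \Gamma^{\mu}_{\alpha\beta}\partial_{\mu}f) = \triangle^{\bgamma}f$.
  \item The quadratic term gives $-f\norm{\mathcal{S}}^{2}_{\bgamma}$, which together with Step~1 leaves $+f\norm{\mathcal{S}}^{2}_{\bgamma}$.
  \item Completing $T\pp.\partial_{\alpha}$ with $\nn$ into a frame, and using $\bR(\nn,\nn,\cdot,\cdot)=0$, the trace of the curvature term is $f\,\Ric(\nn,\nn)$ up to sign.
\end{itemize}

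\textbf{Main obstacle.} The delicate point is the sign of this curvature term. It depends on the convention in Remark~\ref{rem:commutation-rules}, namely $\nabla_{s}\nabla_{t}-\nabla_{t}\nabla_{s}=\bR(\partial_{s},\partial_{t})$, and on the index placement in $\Ric$. I would fix the convention by checking on the round sphere in the unit $3$-sphere, or simply by tracking the sign under Remark~\ref{rem:commutation-rules}, where the order $\bR(\partial_{s}\p,\partial_{\alpha}\p)$ yields $\bg(\bR(\nn,e_{I})e_{I},\nn) = -\Ric(\nn,\nn)$ with the standard convention. This matches the stated formula.
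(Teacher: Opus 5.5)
Your computation is a mild variant of the paper's. The paper varies $Q_{\alpha\beta}=-\bg(\nabla^{\pp}_{\pdi{u^{\alpha}}}\nn,\partial_{\beta}\pp)$. There $\delta\nn$ enters differentiated, the Hessian appears as $\bgamma(\nabla^{\bgamma}_{\pdi{u^{\alpha}}}\grad^{\bgamma}f,\pdi{u^{\beta}})$, and $-f\norm{\mathcal{S}}^{2}_{\bgamma}$ comes from $-\bg(\nabla^{\pp}_{\pdi{u^{\alpha}}}\nn,\nabla^{\pp}_{\pdi{u^{\beta}}}(f\nn))$. You vary $\bg(\nabla^{\pp}_{\pdi{u^{\alpha}}}\partial_{\beta}\pp,\nn)$ instead. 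Then $\delta\nn$ is paired only with the Christoffel symbols of $\bgamma$, the Hessian assembles in coordinates as $\partial_{\alpha}\partial_{\beta}f-\Gamma^{\mu}_{\alpha\beta}\partial_{\mu}f$, and the quadratic term comes from $f\,\bg(\nabla_{\alpha}\nabla_{\beta}\nn,\nn)$. The two curvature terms agree by antisymmetry of $\bg(\bR(X,Y)Z,W)$ in $Z,W$. Your Steps 1--3 are correct and give
\begin{equation*}
  \delta H = \triangle^{\bgamma} f + f\norm{\mathcal{S}}^{2}_{\bgamma} + f\,\gamma^{\alpha\beta}\bg\left(\bR\left(\nn,\pd{\pp}{u^{\alpha}}\right)\pd{\pp}{u^{\beta}},\nn\right).
\end{equation*}

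The error is in how you settle the sign of the last term, the very point you flagged. Fix the sign of $\bR$ by Remark~\ref{rem:commutation-rules}, so $\bR(X,Y)=[\nabla_X,\nabla_Y]-\nabla_{[X,Y]}$, and take the standard Ricci tensor $\Ric(Y,Z)=\tr(X\mapsto\bR(X,Y)Z)$. Then $\sum_I\bg(\bR(\nn,e_I)e_I,\nn)$ is the sum of the sectional curvatures of the planes $\mathrm{span}(\nn,e_I)$, which equals $+\Ric(\nn,\nn)$, not $-\Ric(\nn,\nn)$.

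The sanity check you propose shows this. In the unit $3$-sphere, $\bR(X,Y)Z=\bg(Y,Z)X-\bg(X,Z)Y$, so the trace is $2$. Directly, a geodesic sphere of radius $\rho$ with outward normal has $H=-2\cot\rho$, and $f\equiv 1$ gives $\delta H=2/\sin^{2}\rho=\norm{\mathcal{S}}^{2}_{\bgamma}+2$. So under standard conventions your argument proves $\delta H=\triangle^{\bgamma}f+f\norm{\mathcal{S}}^{2}_{\bgamma}+f\Ric(\nn,\nn)$.

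The minus sign in the statement is correct only for the paper's convention, $R_{ij}=g^{kl}R_{ikjl}$ with $\Riem(X,Y,Z,W)=\bg(\bR(X,Y)Z,W)$. That is, $\Ric(\nn,\nn)=\gamma^{\alpha\beta}\Riem(\nn,\partial_{\alpha}\pp,\nn,\partial_{\beta}\pp)$, the $\Riem(\nn,\nn,\nn,\nn)$ term vanishing, which is minus the usual Ricci tensor. With that definition, antisymmetry in the last two slots turns your curvature term into $-f\Ric(\nn,\nn)$ at once, which is exactly how the paper concludes. You need to state this convention explicitly, or else state the result with $+\Ric$. As written, your last step asserts an identity that is false under the convention you invoke, and lands on the stated formula only by accident.
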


\begin{proof}
  Let $(u^{\alpha})$ be a local coordinate system of $\body$. We have then
  \begin{equation*}
    H = \tr \mathcal{S} = \gamma^{\alpha\beta} Q_{\alpha\beta},
  \end{equation*}
  with
  \begin{equation*}
    Q_{\alpha\beta} = -\bg\left(\nabla^{\pp}_{\pdi{\alpha}} \nn, \pd{\pp}{u^{\beta}}\right).
  \end{equation*}
  We have thus
  \begin{equation*}
    \delta H = - \gamma^{\alpha\mu}(\delta\gamma)_{\mu\nu}\gamma^{\nu\beta}Q_{\alpha\beta} + \gamma^{\alpha\beta}(\delta Q)_{\alpha\beta},
  \end{equation*}
  where
  \begin{align*}
    (\delta Q)_{\alpha\beta} & = - \bg\left(\nabla^{\p}_{\pdi{s}}\nabla^{\p}_{\pdi{u^{\alpha}}} \nn_s, \pd{\p}{u^{\beta}}\right)_{\mid_{s=0}} - \bg\left(\nabla^{\p}_{\pdi{u^{\alpha}}} \nn_s, \nabla^{\p}_{\pdi{s}}\pd{\p}{u^{\beta}}\right)_{\mid_{s=0}}
    \\
                             & = - \bg\left(\nabla^{\pp}_{\pdi{u^{\alpha}}} \delta \nn, \pd{\pp}{u^{\beta}}\right) - \bg\left(\bR\left(\delta \pp,\pd{\pp}{u^{\alpha}}\right)\nn, \pd{\pp}{u^{\beta}}\right) - \bg\left(\nabla^{\pp}_{\pdi{u^{\alpha}}} \nn, \nabla^{\pp}_{\pdi{u^{\beta}}} \delta\pp\right),
  \end{align*}
  using remark~\ref{rem:commutation-rules} and \eqref{eq:pullback-curvature}. Now, using the calculation of $\delta \nn$ by lemma~\ref{lem:delta-n} and \eqref{eq:link-covariant-derivatives}, we get
  \begin{equation*}
    - \bg\left(\nabla^{\pp}_{\pdi{u^{\alpha}}} \delta \nn, \pd{\pp}{u^{\beta}}\right) = \bgamma\left(\nabla^{\bgamma}_{\pdi{u^{\alpha}}} \grad^{\bgamma}f,\pdi{u^{\beta}}\right) = (\hess^{\bgamma} f)_{\alpha\beta},
  \end{equation*}
  and thus
  \begin{equation*}
    - \gamma^{\alpha\beta} \bg\left(\nabla^{\pp}_{\pdi{u^{\alpha}}} \delta \nn, \pd{\pp}{u^{\beta}}\right) = \triangle^{\bgamma} f.
  \end{equation*}
  Secondly, using the fact that $\delta \pp = f \nn$, we have
  \begin{equation*}
    - \gamma^{\alpha\beta} \bg\left(\bR\left(\delta \pp,\pd{\pp}{u^{\alpha}}\right)\nn, \pd{\pp}{u^{\beta}}\right) = - f \gamma^{\alpha\beta}
    \Riem\left(\nn,\pd{\pp}{u^{\alpha}},\nn,\pd{\pp}{u^{\beta}}\right) = - f \Ric\left(\nn, \nn\right),
  \end{equation*}
  where $\Ric = (R_{ij})$ is the Ricci tensor of the metric $\bg$ defined by $R_{ij} = g^{kl}R_{ikjl}$ and we have used the basis $\left(\pd{\pp}{u^{1}},\pd{\pp}{u^{2}},\nn\right)$ where the cometric $\bg^{-1} = (g^{ij})$ at a point $m\in \Sigma$ is given by
  \begin{equation*}
    \begin{pmatrix}
      \gamma^{11} & \gamma^{12} & 0 \\
      \gamma^{21} & \gamma^{22} & 0 \\
      0           & 0           & 1
    \end{pmatrix},
  \end{equation*}
  and the fact that $\Riem \left(\nn,\nn,\nn,\nn\right) = 0$. Thirdly, for $\delta \pp = f \nn$, we have
  \begin{small}
    \begin{equation*}
      - \gamma^{\alpha\beta}\bg\left(\nabla^{\pp}_{\pdi{u^{\alpha}}} \nn, \nabla^{\pp}_{\pdi{u^{\beta}}} \delta\pp\right) = -f \gamma^{\alpha\beta} \bg\left(\nabla^{\pp}_{\pdi{u^{\alpha}}} \nn, \nabla^{\pp}_{\pdi{u^{\beta}}} \nn\right)
      = -f \gamma^{\alpha\beta} \bgamma(\mathcal{S}.\pdi{u^{\alpha}},\mathcal{S}.\pdi{u^{\beta}})= -f \vert\mathcal{S}\vert_{\bgamma}^{2}.
    \end{equation*}
  \end{small}
  Finally, it remains to calculate $- \gamma^{\alpha\mu}(\delta\gamma)_{\mu\nu}\gamma^{\nu\beta}Q_{\alpha\beta}$, where $(\delta\gamma)_{\mu\nu} = -2fQ_{\mu\nu}$, by lemma~\ref{lem:delta-gamma}. We get thus
  \begin{equation*}
    - \gamma^{\alpha\mu}(\delta\gamma)_{\mu\nu}\gamma^{\nu\beta}Q_{\alpha\beta} = 2f \gamma^{\mu\alpha} \gamma^{\nu\beta} Q_{\mu\nu} Q_{\alpha\beta} = 2f \vert\bQ\vert^{2}_{\bgamma} = 2f \vert\mathcal{S}\vert^{2}_{\bgamma}.
  \end{equation*}
  Adding all these terms together, we end up with the following formula
  \begin{equation*}
    \delta H = \triangle^{\bgamma} f + f \norm{\mathcal{S}}^{2}_{\bgamma} - f\Ric\left(\nn, \nn\right),
  \end{equation*}
  which ends the proof.
\end{proof}

We are now ready to calculate the second variation of the functional $\mathcal{A}$ at a critical point $\pp$. To do so, we introduce a two parameters family of embeddings $\p_{st}$ such that
\begin{equation*}
  (\p_{st})_{\mid_{s=t=0}} = \pp, \qquad \delta_{1}\pp := (\partial_{t} \p_{st})_{\mid_{s=t=0}}, \qquad \delta_{2}\pp := (\partial_{s} \p_{st})_{\mid_{s=t=0}}.
\end{equation*}

The second variation of $\mathcal{A}$ at a critical point $\pp$ is defined as
\begin{equation*}
  \delta^{2}\mathcal{A} := \hess_{\pp}(\delta_{1}\pp,\delta_{2}\pp) = \partial_{s} \partial_{t} \mathcal{A}[ \p_{st}]_{\mid_{s=t=0}}.
\end{equation*}
Since we have already obtained a formula~\eqref{eq:first-variation-formula} for the first variation of $\mathcal{A}$, we can recast this formula as
\begin{equation*}
  \delta^{2}\mathcal{A} = \partial_{s} (\rd_{\p_{s}}\mathcal{A}.\delta\pp_{s})_{\mid_{s=0}} = -\pd{}{s}_{\mid_{s=0}} \int_{\body} \bg\left(\delta \p_{s},\nn_{s}\right)H_{s}\vol_{\bgamma_{s}}.
\end{equation*}
Since moreover, we want to calculate this formula at a critical point $\pp$ which satisfies the condition $H=0$, we have thus
\begin{equation*}
  \delta^{2}\mathcal{A} = - \int_{\body} \bg\left(\delta \pp,\nn\right)\delta H\vol_{\bgamma}.
\end{equation*}
Now, since only normal variations contribute by theorem~\ref{thm:tangential-variations}, we will set
\begin{equation*}
  \delta_{i}\pp = f_{i}\nn, \quad \text{where} \quad f_{i} = \bg(\delta\pp,\nn), \qquad i = 1,2,
\end{equation*}
and write
\begin{equation*}
  \delta^{2}\mathcal{A} = - \int_{\body} f_{1}(\delta_{2} H)\vol_{\bgamma}.
\end{equation*}
Finally, using corollary~\ref{cor:delta-H}, we obtain the following expression for the second variation of $\mathcal{A}$ at a critical point
\begin{equation}\label{eq:second-variation-formula}
  \boxed{\delta^{2}\mathcal{A} = - \int_{\body} f_{1} \left(\triangle^{\bgamma} f_{2} + f_{2} \norm{\mathcal{S}}^{2}_{\bgamma} - f_{2}\Ric\left(\nn, \nn\right)\right) \vol_{\bgamma}}
\end{equation}
Since the variations vanish at the boundary, an integration by part allows us to recast this expression as
\begin{equation*}
  \delta^{2}\mathcal{A} = \int_{\body}\left(
  f_{1}f_{2}\Ric\left(\nn, \nn\right)
  + \bgamma\left(\grad^{\bgamma} f_{1},\grad^{\bgamma} f_{2}\right)
  - f_{1} f_{2} \norm{\mathcal{S}}^{2}_{\bgamma}
  \right) \vol_{\bgamma}
\end{equation*}
Taking $\delta_{1}=\delta_{2}:=\delta$, we get in particular
\begin{equation*}
  \delta^2\mathcal{A}
  =
  \int_\body\left[
    f^2\Ric(\nn,\nn)
    + \norm{\grad^{\bgamma}f}_{\bgamma}^2
    - \norm{\mathcal{S}}_{\bgamma}^2 f^{2}
    \right]\dvol_{\bgamma}.
\end{equation*}

\begin{rem}
  In the problem of geodesics on a Riemannian manifold, where the functional to minimize is the kinetic energy
  \begin{equation*}
    K[c] := \frac{1}{2} \int_{0}^{1} \norm{\dot{c}}^{2}_{\bg}\,\rd t,
  \end{equation*}
  and $c$ is a curve, the second variation at a critical point is written as
  \begin{equation*}
    \delta^{2} K = - \int_{0}^{1} \bg \left( \delta_{1}c, \frac{D^{2}}{Dt^{2}} \delta_{2}c +
    R(\delta_{2}c,\dot{c})\dot{c} \right)\, \rd t,
  \end{equation*}
  where
  \begin{equation*}
    \frac{D}{Dt} \xi = \nabla^{c}_{\pdi{t}}\xi.
  \end{equation*}
  The equation
  \begin{equation*}
    \frac{D^{2}J}{Dt^{2}} + R(J,\dot{c})\dot{c} = 0
  \end{equation*}
  is known as the \emph{Jacobi equation}. Therefore, the equation
  \begin{equation*}
    \triangle^{\bgamma} f + f \vert\mathcal{S}\vert^{2}_{\bgamma} - f\Ric\left(\nn, \nn\right) = 0
  \end{equation*}
  may be considered as the analog of the Jacobi equation for the problem of minimal surfaces.
\end{rem}

\section{Application to soap bubbles}\label{sec:soap-bubbles}

As an application of our preceding calculation, we shall consider the soap bubble problem.
We consider two circles of radius $R$, perpendicular to their common axis of symmetry, on which a soap film rests. The first is placed at $z=-d$ and the second at $z=d$:
We consider embeddings $\pp$ of the cylinder $\body = [0,2\pi[\times[-1,1]$ into the flat space $(\RR^3,\bq)$, such that $\partial\body=\mathcal{C}$ where $\mathcal{C}$ is the union of the two circles.
It has been shown in various ways \cite{Rie1898,Shi1956,Sch1983} that the corresponding minimal surface can only be a surface of revolution. Consequently, configurations are chosen axisymmetric:
\begin{equation*}
  p:(\theta,z)\in[0,2\pi[\times[-1,1]\longmapsto  d
  \begin{bmatrix}
    r(z)\cos\theta \\
    r(z)\sin\theta \\
    z
  \end{bmatrix},
  \quad \text{with} \quad r(\pm 1) = R.
\end{equation*}
We have
\begin{equation*}
  \partial_\theta p = d
  \begin{bmatrix}
    -r\sin\theta \\
    r\cos\theta  \\
    0
  \end{bmatrix},
  \quad \partial_z p= d
  \begin{bmatrix}
    r'\cos\theta \\
    r'\sin\theta \\
    1
  \end{bmatrix}.
\end{equation*}
The first fundamental form, or Riemannian metric $\bgamma = \pp^{*}\bq$, defined on the body $\body$ is given by
\begin{equation*}
  \bgamma = d^2
  \begin{bmatrix}
    r^2 & 0         \\
    0   & r^{'2} +1
  \end{bmatrix},
  \quad\text{and}\quad
  \bgamma^{-1} = \frac{1}{d^2}
  \begin{bmatrix}
    \frac{1}{r^2} & 0                   \\
    0             & \frac{1}{r^{'2} +1}
  \end{bmatrix}.
\end{equation*}
The direct unit normal vector to $\Sigma = \pp(\body)$ is given by
\begin{equation*}
  \nn
  = \frac{\partial_\theta p\wedge\partial_z p}{\norm{\partial_\theta p\wedge\partial_z p}}
  = \frac{1}{\sqrt{r^{'2}+1}}
  \begin{bmatrix}
    \cos\theta \\
    \sin\theta \\
    -r'
  \end{bmatrix}.
\end{equation*}
The second fundamental form \eqref{eq:second-fundamental-form} is written as
\begin{equation*}
  \bQ =
  \begin{bmatrix}
    E & F \\
    F & G
  \end{bmatrix},
\end{equation*}
with
\begin{align*}
  E & = -\bq \left( \partial_{\theta}\nn , \partial_{\theta}\pp\right) = - \frac{dr}{\sqrt{r^{'2}+1}}, \\
  F & = -\bq \left( \partial_{\theta}\nn , \partial_{z}\pp\right) = 0,                                 \\
  G & = -\bq \left( \partial_{z} \nn , \partial_{z}\pp\right) = \frac{dr^{''}}{\sqrt{r^{'2}+1}}.
\end{align*}
We have thus
\begin{equation*}
  \mathcal{S} = \bgamma^{-1}\bQ = \frac{1}{d} \frac{1}{\sqrt{r'^2+1}}
  \begin{bmatrix}
    -\frac{1}{r} & 0                  \\
    0            & \frac{r''}{r'^2+1}
  \end{bmatrix}
\end{equation*}
and the minimal surface equation $\tr \mathcal{S}=0$ becomes
\begin{equation}\label{eq:min-surfaces}
  r'^2+1-rr''=0.
\end{equation}
Its solutions are the one-parameter family of functions
\begin{equation}\label{eq:r}
  r(z)=\frac{\ch(sz)}{s}
  ,\qquad s\in\RR.
\end{equation}
To satisfy the boundary conditions $dr(\pm 1) = R$, we need to have
\begin{equation}\label{eq:r-boundary-condition}
  \frac{\ch s}{s}=\frac{R}{d}.
\end{equation}
The function $s\to\frac{\ch s}{s}$ (see \autoref{fig:graph}) reaches its minimum at $s_0\simeq 1.2$, which is the unique solution of $s_{0} = \coth s_{0}$, and its minimum value is $\ch s_0 / s_0 \simeq 1.5$.
When $\lambda := R/d > \ch s_0 / s_0$, equation~\eqref{eq:r-boundary-condition} has two solutions $s_{1}(\lambda) < s_{0}$ and $s_{2}(\lambda) > s_{0}$, giving rise to two solutions of \eqref{eq:min-surfaces},
\begin{equation*}
  r_{1}(z)=\frac{\ch\left(s_{1}z\right)}{s_{1}}
  \quad\text{and}\quad
  r_{2}(z)=\frac{\ch\left(s_{2}z\right)}{s_{2}}.
\end{equation*}
\begin{figure}[H]
  \centering
  \begin{tikzpicture}
    \begin{axis}[
        domain=0.15:4,
        samples=200,
        xmin=-1,
        xmax=4,
        ymin=-1,
        ymax=8,
        axis lines=middle,
        xlabel={$ s $},
        ylabel={$ \frac{\cosh(s)}{s} $},
        xlabel style={anchor=north},
        ylabel style={anchor=east},
        xticklabels={},
        yticklabels={},
      ]

      \addplot[blue, thick] {cosh(x)/x};

      \addplot[green, thick, dashed] {3};

      \def\sA{0.34}
      \def\sB{2.85}
      \def\sZero{1.20}

      \addplot[green, dashed, thick] coordinates {(\sA,0) (\sA,3)};
      \addplot[green, dashed, thick] coordinates {(\sB,0) (\sB,3)};
      \addplot[blue, dashed, thick] coordinates {(\sZero,0) (\sZero,1.5)};

      \node at (axis cs:\sA,0) [anchor=north] {$s_{1}(\lambda)$};
      \node at (axis cs:\sZero,0) [anchor=north] {$s_0$};
      \node at (axis cs:\sB,0) [anchor=north] {$s_{2}(\lambda)$};

      \node at (axis cs:0,3) [anchor=east]
      {{\color{green}$\lambda=\frac{R}{d}$}};

      \addplot[blue, thick, dashed] {1.5};

      \addplot[red, thick, dashed] {1};

      \node at (axis cs:0,1) [anchor=east]
      {{\color{red}$\lambda=\frac{R}{d}$}};

    \end{axis}
  \end{tikzpicture}
  \caption{Curve $s\longmapsto\frac{\cosh(s)}{s}$}
  \label{fig:graph}
\end{figure}

In \cite[Example 3.5.12]{Ham1982}, Hamilton studied the stability of these solutions but only for axisymmetric variations. We propose, here, to consider the problem using arbitrary variations. We introduce thus the variation
\begin{equation*}
  \delta p(\theta,z)= d
  \begin{bmatrix}
    \delta r(\theta,z)\cos\theta \\
    \delta r(\theta,z)\sin\theta \\
    0
  \end{bmatrix},
\end{equation*}
along the solution $r(z) = \cosh (sz)/s$. We have then
\begin{equation*}
  f = \bq(\delta\pp,\nn) = d \frac{\delta r}{\sqrt{r'^2+1}},
\end{equation*}
and
\begin{equation*}
  f_{\theta} = \partial_{\theta}f = d \frac{1}{\sqrt{r'^2+1}} \delta r_{\theta},
  \qquad
  f_{z} = \partial_{z}f = d \left(\frac{1}{\sqrt{r'^2+1}} \delta r_{z} - \frac{r'r''}{(r'^2+1)^{3/2}} \delta r\right).
\end{equation*}
We get thus
\begin{equation*}
  \norm{\grad^{\bgamma}f}^2_{\bgamma}
  = \frac{1}{r^{2}(r'^2+1)} {\delta r_{\theta}}^{2} + \frac{1}{(r'^2+1)^{2}}{\delta r_{z}}^{2} - 2\frac{r'r''}{(r'^2+1)^{3}} \delta r \delta r_{z} + \frac{(r'r'')^{2}}{(r'^2+1)^{4}} {\delta r}^{2}
\end{equation*}
On the other hand, we have
\begin{equation*}
  \norm{\mathcal{S}}_{\bgamma}^2 = \tr (\mathcal{S} \bgamma \mathcal{S}^{\top} \bgamma^{-1}) = \tr (\mathcal{S} \mathcal{S}^{\top}) = \frac{1}{d^{2}} \frac{(r'^2+1)^{2} + (rr'')^{2}}{r^{2}(r'^2+1)^{3}},
\end{equation*}
and
\begin{equation*}
  \vol_{\bgamma} = \sqrt{\det\bgamma} \, \rd \theta \rd z = d^{2} r \sqrt{r'^2+1} \, \rd \theta \rd z .
\end{equation*}
Therefore, using the fact that $rr''=r'^2+1$, the integral
\begin{equation*}
  \delta^2\mathcal{A}
  =
  \int_\body\left( \norm{\grad^{\bgamma}f}_{\bgamma}^2 - \norm{\mathcal{S}}_{\bgamma}^2 f^{2} \right)\dvol_{\bgamma}
\end{equation*}
recasts, after some calculations, as
\begin{equation*}
  \delta^2\mathcal{A}[p] = d^{2}\int_{0}^{2\pi} \int_{-1}^{1}  \frac{1}{r (r'^2+1)^{3/2}} \left( (r'^2+1) {\delta r_{\theta}}^{2} + r^{2} {\delta r_{z}}^{2} - 2rr' \delta r \delta r_{z} - (2-(r')^{2}) {\delta r}^{2} \right) \, \rd \theta \rd z.
\end{equation*}
After an integration by parts, to get rid of the term $\delta r \delta r_{z}$, we obtain
\begin{equation*}
  \delta^2\mathcal{A}[p] = d^{2}\int_{0}^{2\pi} \int_{-1}^{1} \frac{1}{r (r'^2+1)^{3/2}} \left( (r'^2+1) {\delta r_{\theta}}^{2} + r^{2} {\delta r_{z}}^{2}  - (1 +(r')^{2}) {\delta r}^{2} \right) \, \rd \theta \rd z.
\end{equation*}

\begin{rem}
  Restricting to axisymmetric variations ($\delta r_{\theta} = 0$), we recover exactly Hamilton's formulas in \cite[Example 3.5.12]{Ham1982}, where $r$ is noted $f$, and $\delta r$ is noted $h$.
\end{rem}
Injecting \eqref{eq:r}, we have finally
\begin{equation*}
  \delta^2\mathcal{A}[p] = d^{2}\int_{0}^{2\pi} \int_{-1}^{1} \frac{1}{\cosh^{2}(sz)} \left( s{\delta r_{\theta}}^{2} + \frac{1}{s} {\delta r_{z}}^{2} - s{\delta r}^{2}  \right) \, \rd \theta \rd z
\end{equation*}
The solution $s_{1}(\lambda)\underset{\lambda\to +\infty}{\longmapsto} 0^+$ is stable, as shown by the asymptotic expansion
  \begin{align*}
    \delta^2\mathcal{A}[p_{1}] &
    \underset{\lambda\to +\infty}{\sim}d^2
    \int_0 ^{2\pi}\int_{-1} ^{1} \frac{\delta r_z}{s_1(\lambda)}\geqslant 0.
  \end{align*}
  The solution $s_{2}(\lambda)\underset{\lambda\to +\infty}{\longmapsto} +\infty$ is not stable under any variations: an axisymmetric variation $(\delta r_\theta=0)$ makes the following equivalent quantity negative:
  \begin{equation*}
    \delta^2\mathcal{A}[p_{2}]
    \underset{\lambda\to +\infty}{\sim}
    4d^2\int_0 ^{2\pi}\int_{-1} ^{1} e^{-2s_{2}(\lambda)\abs{z}}\left(s_{2}(\lambda)\delta r^2_\theta-s_{2}(\lambda)\delta r^2\right)
    .
  \end{equation*}
Increasing the distance between the two circles (through $d$), the ratio $\lambda$ decreases until reaching the critical value $\lambda=\frac{\ch s_0}{s_0}\simeq 1.5$, where $s_{1}=s_{2}=s_0$. Beyond this point, there are no further solutions, and the soap film breaks.
\section{Conclusion}

We used pull-back covariant derivatives enabling a derivation of the minimal surface problem within the Lagrangian framework. We deduced from a geometric argument that only the normal component contributes to both the first and second variations of the minimal surface problem. A simple calculation of these variations were provided in this framework.
Finally, we illustrated these calculations on the classical example of a soap film spanning two circular boundaries.


\end{document}